\documentclass[12pt, a4paper]{amsart}
\usepackage{amsmath,amsfonts,amssymb}
\usepackage{setspace}
\usepackage{longtable}
\usepackage[mathscr]{eucal}
\usepackage{amsmath, amsthm}
\usepackage{mathrsfs}
\usepackage{amsbsy}
\usepackage{dsfont}
\usepackage{bbm}
\usepackage{wasysym}
\usepackage{stmaryrd}
\usepackage{url}

\usepackage{color}

\onehalfspacing
\newtheorem{theorem}[subsection]{Theorem}

\newtheorem{corollary}[subsection]{Corollary}

\input xypic
\xyoption{all}
\usepackage[breaklinks]{hyperref}
\newcommand \ZZ {{\mathbb Z}}

\newcommand \PR {{\mathbb P}}

\newcommand \Pic {{\rm {Pic}}}



\newcommand \CH {{\it {CH}}}
\newcommand \wt {\widetilde }

\usepackage{graphicx}
\begin{document}

\title{Universal codimension two cycle on a very general cubic threefold}

\author{ Kalyan Banerjee}
\address{SRM AP, India}
\email{kalyan.ba@srmap.edu.in}
\maketitle

\begin{abstract}
In this paper, we prove that a very general cubic threefold does not admit a universal codimension-two cycle and hence is stably irrational.
\end{abstract}

\section {Introduction}

The rationality of smooth, projective varieties is an important and classical question in algebraic geometry. In the 1970s Clemens and Griffiths \cite{CG} proved that a smooth cubic threefold in 
$\mathbb{P}^4$ is non-rational. The technique to prove this was delicate and elegant. They proved that if the cubic is rational then it's intermediate Jacobian is isomorphic to Jacobian of a smooth, projective curve and then they prove that such phenomena does not occur. From then on, it is an important and unsettled quest about the stable rationality of a cubic threefold. That is, whether for a smooth cubic threefold $X$, $X\times \mathbb{P}^n$ is birational to $\mathbb{P}^r$, for some $n,r$ positive integers. In this paper, we settle this question for a very general cubic threefold $X$. That is, we prove that

\smallskip

\textit{A very general cubic threefold in $\mathbb {P}^4$ is not stably rational.}

\smallskip

The method to show this is to use the integral Chow-theoretic decomposition of the diagonal of the cubic. We show that it is not possible to have an integral Chow-decomposition of the diagonal of the cubic threefold. This is proven by showing that there is no universal codimension 2 cycle on the product $P_X\times X$, where $P_X$ is the Prym variety associated to $X$. This is inspired by the work of Claire Voisin, \cite{Voi1}, \cite{Voi2}, where she proves that for cubic hypersurfaces, the integral Chow theoretic decomposition of diagonal is equivalent to the integral cohomological decomposition of the diagonal and from that it follows that there exists a universal codimension 2 cycle on the product $P_X\times X$ satisfying certain properties. 

We show that if such a codimension-two cycle exists then the Prym variety of $X$, is isogenous to the Jacobian of a smooth, projective curve, which contradicts a result of Naranjo and Pirola, \cite{NP}, that for a very general cubic threefold it is not possible. In this regard, we must also mention that there is an approach by Engel, Fortman and Schreider \cite{EFS} that leads to the same result as in this article but is motivated by tropical geometry and independent of our approach.

Convention: We work over the field of complex numbers.

\section{Survey of known results and definitions}

In this section, we recall the definitions and results to write the proof of the result.

Let $X$ be a smooth, projective cubic threefold in $\PR^4$. Consider the diagonal in $X\times X$ denoted as $\Delta_X$. We say that $\Delta_X$ admits a Chow theoretic decomposition if it can be written as
$$x\times X+Z$$
modulo rational equivalence, 
where $x$ is a closed point on $X$ and $Z$ is supported on $X\times D$, where $D$ is a proper Zariski closed subset in $X$.

We say that $\Delta_X$ admits a cohomological decomposition of the diagonal if the above decomposition occurs in $H^6(X\times X,\ZZ)$.

We recall the notion of universal codimension two cycles on $P_X\times X$, where $X$ is a cubic threefold and $P_X$ the Prym variety of $X$. 

Let $Z\in \CH^2(P_X\times X)$ is a universal codimension two cycle if $Z|_{a\times X}$ is homologous to zero for all $a$ in $P_X$ and we have 
$$a\mapsto AJ(Z_a)=Id\;.$$
That is the composition of $AJ$ with $Z_*$ from $P_X$ to $P_X$ is identity.

There is an important result of Voision \cite{Voi1}, namely,

\begin{theorem}\cite{Voi1}
Let $X$ be a smooth cubic hypersurface. Assume $H^*(X,\ZZ)/H^*(X, \ZZ)_{alg}$ has no 2-torsion (this holds, for example, if $\dim X$ is odd or $\dim X\leq 4$, or $X$ is very general of any dimension). Then X admits a Chow-theoretic decomposition of the diagonal if and only if it admits a cohomological decomposition of the diagonal.
\end{theorem}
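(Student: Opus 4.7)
The direction ``Chow-theoretic $\Rightarrow$ cohomological'' is automatic: apply the cycle class map
$$\cl\colon \CH^n(X\times X)\to H^{2n}(X\times X,\ZZ),\qquad n:=\dim X,$$
to a given Chow decomposition. All the substance lies in the converse, and my plan is to combine the Bloch--Srinivas decomposition-of-the-diagonal technique with a refinement, specific to cubic hypersurfaces, that exploits the Fano variety of lines.

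Assume $[\Delta_X]=[x\times X]+[Z]$ in $H^{2n}(X\times X,\ZZ)$ with $Z$ supported on $X\times D$, and set $\alpha:=\Delta_X-x\times X-Z\in \CH^n(X\times X)$, which is cohomologous to zero by hypothesis; the goal is to modify $Z$ by a rational equivalence so that $\alpha$ becomes zero in $\CH^n$. First I would handle the $\QQ$-version by the standard Bloch--Srinivas method: restriction of $\alpha$ to the generic point $\eta$ of the first factor gives a class in $\CH^n(X_{\CC(\eta)})$, and because $X$ is Fano, hence rationally connected, one has $\CH_0(X_K)\otimes\QQ=\QQ$ for every extension $K/\CC$; spreading a rational equivalence then produces an equality $N\cdot\alpha=0$ in $\CH^n(X\times X)$ modulo cycles already of the prescribed supported shape, for some nonzero integer $N$. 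This closes the argument with $\QQ$-coefficients.

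The integral refinement is the heart of the matter, and I would attack it via the Fano variety $F(X)$ of lines on $X$ together with the universal line $L\subset F(X)\times X$. Viewed as a correspondence, $L$ induces cylinder homomorphisms both in cohomology and in Chow. A direct intersection-theoretic computation on a smooth cubic produces a ``factor of $2$'' identity on the primitive middle cohomology of $X$: the self-composition of the cylinder correspondence acts as multiplication by an integer matrix in which $2$ explicitly appears. Combined with the cohomological hypothesis on $\alpha$, this supplies an honest algebraic cycle whose cohomology class equals $2\alpha$, up to a class in $H^{2n}(X\times X,\ZZ)/H^{2n}(X\times X,\ZZ)_{\alg}$ whose order is a power of $2$.

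The hypothesis that $H^*(X,\ZZ)/H^*(X,\ZZ)_{\alg}$ has no $2$-torsion, together with its K\"unneth transport to $X\times X$, now permits the division by $2$ to be carried out algebraically rather than only cohomologically: the residual torsion class vanishes modulo algebraic cycles, so $\alpha$ itself lifts to an algebraic cycle of the desired supported shape, yielding $\Delta_X=x\times X+Z'$ in $\CH^n$. The main obstacle is the third step: extracting the precise factor of $2$ from the incidence geometry of lines on a cubic and identifying exactly which integral obstruction it kills; tracking the $2$-torsion through the spreading, restriction, and correspondence operations, and matching it cleanly to the stated hypothesis, is where the genuine work resides.
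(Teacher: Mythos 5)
The paper does not prove this statement: it is quoted verbatim from Voisin \cite{Voi1} and used as a black box, so there is no in-paper proof to compare against. Judged on its own terms and against Voisin's published argument, your sketch has the right overall shape (easy direction via the cycle class map; a ``factor of $2$'' coming from lines on the cubic; the no-$2$-torsion hypothesis to divide by $2$), but the two steps where you locate ``the genuine work'' are exactly where the proposal has genuine gaps rather than compressible details.

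First, the source of the factor of $2$. In Voisin's proof it is the degree-$2$ unirational parametrization of $X$ obtained from a single line $\ell\subset X$ (the $\PR^{n-1}$-bundle of tangent directions along $\ell$ maps onto $X$ with degree $2$ via the residual intersection point), which yields \emph{unconditionally} an equality $2\Delta_X=2(x\times X)+Z$ in $\CH^n(X\times X)$ with $Z$ supported on $X\times D$ --- a Chow-theoretic, not merely cohomological, identity. Your substitute, ``the self-composition of the cylinder correspondence'' attached to the universal line over $F(X)$, is a different operator; you offer no computation, and an identity on primitive middle cohomology would in any case only reprove a cohomological statement, which is the hypothesis rather than the goal. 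Second, the division by $2$. You claim an algebraic cycle whose class is $2\alpha$ up to $2$-power torsion in $H^{2n}(X\times X,\ZZ)/H^{2n}(X\times X,\ZZ)_{\alg}$, and that the hypothesis then lets you divide ``algebraically.'' But $\alpha=\Delta_X-x\times X-Z$ is already algebraic with vanishing cohomology class, so this obstruction group is the wrong one: the relevant obstruction is to rational equivalence modulo cycles supported on $X\times D$, and it does not a priori live in any quotient of $H^{2n}(X\times X,\ZZ)$. The substance of Voisin's argument is precisely the identification of this obstruction --- after restricting the supported cycle to a desingularization of $D$ and tracking Künneth components --- with a class in the $2$-torsion of $H^*(X,\ZZ)/H^*(X,\ZZ)_{\alg}$ of $X$ itself. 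Without that identification the stated hypothesis is never actually engaged, so as written the proposal is a plausible research plan rather than a proof.
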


Then another important result, which connects the cohomological decomposition with the existence of the universal codimension 2 cycle, is as follows:

\begin{theorem}\cite{Voi}\label{thm1}
Let $X$ be a smooth, rationally connected threefold. Then $X$
admits a cohomological decomposition of the diagonal if and only if the following three
conditions are satisfied:
(i) $H^3(X, \ZZ)$ has no torsion.

(ii) There exists a universal codimension 2 cycle in $X \times P_X.$

(iii) The minimal class $\theta^{g-1}/(g-1)!$ on $P_X$, $\dim P_X = g$, is algebraic, that is, the
class of a 1-cycle in $P_X.$
\end{theorem}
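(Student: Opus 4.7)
The plan is to prove the two implications separately, using the Bloch--Srinivas framework of correspondences acting on cohomology and Chow groups, together with the intermediate Jacobian machinery available for rationally connected threefolds (for which $H^3(X, \QQ)$ is purely of Hodge type $(2,1)+(1,2)$, so that $P_X = J^3(X)$ is an algebraic abelian variety). For ($\Rightarrow$), the task is to analyse how a cohomological decomposition $\Delta_X = x \times X + Z$, with $Z$ supported on $X \times D$ for a proper Zariski closed $D \subset X$, constrains the cohomology and Chow groups of $X$ and produces a universal codimension-two cycle on $X \times P_X$. For ($\Leftarrow$), the task is to assemble the three conditions into a class in $H^6(X \times X, \ZZ)$ whose difference with $\Delta_X$ is supported on $X \times D$.

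For ($\Rightarrow$), after replacing $D$ by a desingularization $\wt{D} \to D$, the component of the identity acting on $H^3(X, \ZZ)$ factors through $H^*(\wt{D}, \ZZ)$; a dimension count together with the Hodge structure on $H^3(X)$ forces this factorization through $H^1(\wt{D}, \ZZ)$, and since $H^1$ of a smooth projective variety is torsion-free, (i) follows. For (ii), the restriction of $Z$ to $\wt{D} \times X$ is a family of codimension-two cycles on $X$ parametrised by $\wt{D}$ whose Abel--Jacobi map factors through the Albanese morphism $\wt{D} \to \Alb(\wt{D})$ and then through a surjection $\Alb(\wt{D}) \twoheadrightarrow P_X$; the descended cycle on $P_X \times X$ is the required universal cycle. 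For (iii), the image of a general complete intersection curve $C \hookrightarrow \wt{D}$ under $C \to \Alb(\wt{D}) \twoheadrightarrow P_X$ is a $1$-cycle whose cohomology class equals $\theta^{g-1}/(g-1)!$ by the Poincar\'e formula on Jacobians, with the integrality of the identification guaranteed by (i).

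For ($\Leftarrow$), starting from the universal cycle $\mathcal{Z} \in \CH^2(P_X \times X)$ of (ii) and a $1$-cycle $\gamma \subset P_X$ representing the minimal class of (iii), the pullback of $\mathcal{Z}$ along a normalization $C \to \gamma$ gives a cycle $\mathcal{Z}_C \in \CH^2(C \times X)$. The fact that $\gamma$ represents $\theta^{g-1}/(g-1)!$ together with the identity property of $\mathcal{Z}$ implies, via the Poincar\'e formula, that $\mathcal{Z}_C$ induces $\id_{H^3(X, \QQ)}$ as a correspondence; condition (i) then upgrades this to an integral identity on $H^3(X, \ZZ)$. The remaining K\"unneth components of $\Delta_X$ in degree $6$ either equal $x \times X$ or can be made supported on $X \times D$ for a suitable divisor $D \subset X$ containing the image of $C$, by straightforward representability of algebraic classes in even cohomology.

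The main obstacle I expect is the integral refinement in both directions: rationally, the equivalence follows from standard Bloch--Srinivas and Hodge-theoretic arguments, but the passage to $\ZZ$ coefficients requires (i) to suppress torsion in $H^3(X)$, (iii) to enforce that the intrinsically rational class $\theta^{g-1}/(g-1)!$ is represented by an \emph{integral} algebraic $1$-cycle on $P_X$, and (ii) to eliminate denominators coming from the degree of the surjection $\Alb(\wt{D}) \to P_X$. Controlling all three simultaneously so that the reconstructed class in $H^6(X \times X, \ZZ)$ agrees with $\Delta_X$ on the nose, and not merely after multiplying by an integer, is the crux of the theorem.
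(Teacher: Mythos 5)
The paper does not actually prove this statement: it is imported verbatim as a known theorem of Voisin and then applied, so there is no in-paper proof to compare your argument against. (As an aside, the reference \cite{Voi} attached to it points to the note on symplectic involutions of K3 surfaces; the theorem is proved in \cite{Voi2}, the Inventiones 2015 paper on unirational threefolds.)

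Judged against Voisin's actual proof, your outline captures the right architecture --- Bloch--Srinivas-style factorization of the corrected diagonal through a desingularization $\wt{D}$ of the supporting divisor, descent of the resulting family of cycles to $P_X$, and reconstruction of the diagonal class from $\mathcal{Z}$, the curve $\gamma$, and algebraic K\"unneth components --- but two steps are materially underpowered. First, for (iii) the Poincar\'e formula only identifies the class of $C$ inside its own Jacobian $J(C)$; the image of $C$ in $P_X$ has class $\theta^{g-1}/(g-1)!$ only if the surjection $J(C)\to P_X$ interacts with the two principal polarizations in a very specific way, and establishing that compatibility (via the fact that the decomposition realizes $H^3(X,\ZZ)$ as a direct summand of $H^1(\wt{C},\ZZ)$ respecting the unimodular intersection pairings) is the real content of that implication; it is not a formal consequence of surjectivity plus the Poincar\'e formula. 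Second, in the converse direction the claim that the remaining degree-$6$ K\"unneth components of $\Delta_X$ can be represented by cycles supported on $X\times D$ ``by straightforward representability'' silently invokes the integral Hodge conjecture for $H^4$ of a rationally connected threefold --- true, but a nontrivial theorem of Voisin rather than a routine observation --- and the elimination of the denominator of the isogeny $\Alb(\wt{D})\to P_X$ in (ii) is precisely where integrality of the decomposition, as opposed to the rational Bloch--Srinivas statement, must be used. As written, your argument would establish the equivalence with $\QQ$-coefficients, but the integral crux --- which you correctly flag as the main obstacle --- is left unresolved.
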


The next result that we use in our paper is the following due to Naranjo and Pirola:

\begin{theorem}\cite{NP}
Let $X$ be a very general cubic threefold then $P_X$ is not isogneous to the Jacobian of a smooth, projective curve $C$.
\end{theorem}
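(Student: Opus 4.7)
The plan is to argue by contradiction via a spreading-out argument combined with an analysis of the infinitesimal variation of Hodge structure (IVHS). Suppose that for a very general $X$ in the moduli space $\bcM$ of smooth cubic threefolds in $\PR^4$, the Prym $P_X$ is isogenous to $\Jac(C)$ for some smooth projective curve $C$ of genus $5$. Since $\bcM$ is irreducible of dimension $10$ and the set of isogeny classes of $5$-dimensional principally polarized abelian varieties that are isogenous to a Jacobian is a countable union of constructible subsets of the moduli of $5$-dimensional ppavs, a Baire-category argument lets us pass to a Zariski dense open $U \subset \bcM$ (after possibly an \'etale cover) over which there is an algebraic family $\pi : \bcC \to U$ of smooth curves of genus $5$ and, writing $\bcX \to U$ for the universal cubic, an isogeny of abelian schemes $\Jac(\bcC/U) \to P_{\bcX/U}$ of bounded degree.

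Next, I would differentiate the period maps at a very general $t \in U$. The isogeny induces an isomorphism of the variations of $\QQ$-Hodge structure, so the IVHS cup-products must be compatible on both sides. On the cubic side, Griffiths' theory combined with Macaulay's theorem identifies the tangent space $T_{\bcM,t}$ with the degree-$3$ piece $R(F_t)_3$ of the Jacobian ring of the defining cubic polynomial $F_t$, and identifies the IVHS cup-product
$$T_{\bcM,t} \otimes H^{2,1}(X_t) \lra H^{1,2}(X_t)$$
with the multiplication $R(F_t)_3 \otimes R(F_t)_3 \to R(F_t)_6$. On the Jacobian side, the analogous cup-product is controlled by the Petri multiplication $\Sym^2 H^0(C_t, K_{C_t}) \to H^0(C_t, 2 K_{C_t})$.

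The key step is to exploit this IVHS compatibility to extract an algebraic invariant that distinguishes the two structures. A natural candidate is the Donagi--Markman symmetric cubic form on $H^{2,1}$: on the Jacobian side this cubic factors through the multiplication in the canonical ring of $C$, which imposes strong constraints on $C$ (such as the existence of a particular special linear series); on the cubic threefold side, the cubic is computed directly from $F$ via the Jacobian ring and, for generic $F$, admits no such factorization. One then argues that the factorization forced by the isogeny puts $F_t$ in a proper closed subvariety of the space of cubic forms, contradicting the very general choice of $X$.

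The main obstacle is to isolate a clean, computable invariant whose Jacobian-side value is incompatible with its cubic-threefold-side value in dimension $5$. Both Hodge structures have the full symplectic group as their generic Mumford--Tate group, so monodromy-type arguments alone are insufficient; the proof must make essential use of the polynomial structure of the Jacobian ring of $F$ together with the Petri multiplication. A secondary, more technical difficulty is controlling the effect of the isogeny on the IVHS, which is handled by working with $\QQ$-Hodge structures, where isogenies become isomorphisms.
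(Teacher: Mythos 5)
First, note that the paper does not prove this statement at all: it is quoted verbatim from Naranjo--Pirola \cite{NP} and used as a black box, so there is no in-paper argument to compare yours against. Judged on its own terms, your proposal has a genuine gap, and you name it yourself: ``the main obstacle is to isolate a clean, computable invariant whose Jacobian-side value is incompatible with its cubic-threefold-side value in dimension $5$.'' That obstacle is the entire content of the theorem. Everything before it (spreading out the isogeny over an \'etale cover of a dense open $U$ in the $10$-dimensional moduli space, passing to $\QQ$-Hodge structures so the isogeny becomes an isomorphism of variations, differentiating the period maps) is standard and correct, but it only sets the stage; no dimension count can finish, since the isogeny orbit of the $12$-dimensional Jacobian locus in $\bcA_5$ could a priori contain the $10$-dimensional image of the cubic locus. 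There is also a concrete computational error in your setup: by Griffiths' residue calculus for a cubic $F$ in $\PR^4$, one has $H^{2,1}(X)\cong R(F)_1$ and $H^{1,2}(X)\cong R(F)_4$, with the IVHS given by multiplication $R(F)_3\otimes R(F)_1\to R(F)_4$; your identification with $R(F)_3\otimes R(F)_3\to R(F)_6$ is wrong, and in fact $R(F)_6=0$ because the Jacobian ring of a smooth cubic threefold is Artinian Gorenstein with socle in degree $5$.

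For comparison, the actual proof in \cite{NP} does not attempt a direct confrontation between the Jacobian ring of $F$ and the Petri map of a putative curve $C$. Instead it degenerates: using the description of $P_X$ as a Prym variety of the discriminant double cover and Mumford's classification of Prym varieties of special (e.g.\ hyperelliptic) curves, one specializes the cubic threefold so that $P_X$ becomes the Jacobian of a hyperelliptic curve of genus $5$; since ``isogenous to a Jacobian'' is a condition cut out by a countable union of closed subvarieties, the very general statement reduces to showing that the Jacobian of a very general hyperelliptic curve of genus $\geq 3$ is not isogenous to the Jacobian of any other curve. That last statement is where the infinitesimal variation of Hodge structure is genuinely used, and it is tractable there precisely because the hyperelliptic locus is where the multiplication $\Sym^2 H^0(K_C)\to H^0(2K_C)$ degenerates maximally, giving the ``clean invariant'' your plan lacks. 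If you want to complete your approach, the realistic route is to import this degeneration rather than to analyze the Donagi--Markman cubic of a generic cubic threefold directly.
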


\section{ Stable irrationality of smooth cubic threefolds in $\PR^4$}

In this section, we prove the following.

\begin{theorem}
A very general smooth cubic threefold embedded in $\PR^4$ is not stably rational.
\end{theorem}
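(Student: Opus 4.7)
The plan is to argue by contradiction: assume $X$ is stably rational, and derive a contradiction with the Naranjo--Pirola theorem (Theorem 2.4). First, stable rationality forces an integral Chow-theoretic decomposition of the diagonal on $X$. This is standard by arguments of Bloch--Srinivas type: if $X\times \PR^{n}$ is birational to $\PR^{3+n}$, one tracks the class of the diagonal through the birational transformations and uses triviality of the relevant Chow groups of projective space. Combining this with Theorem 2.2 --- whose hypothesis is satisfied since $\dim X=3$ is odd --- the integral Chow-theoretic and cohomological decompositions of the diagonal become equivalent. Applying Theorem 2.3 (the cubic threefold is unirational, hence rationally connected) then supplies, in particular, a universal codimension-two cycle $Z\in \CH^{2}(P_X\times X)$, and also the algebraicity of the minimal class $\theta^{g-1}/(g-1)!$ on $P_X$, where $g=\dim P_X=5$.

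The main new step is to deduce from $Z$ (together with the algebraicity of the minimal class) an isogeny between $P_X$ and the Jacobian of a smooth projective curve. The idea is as follows. Choose a smooth projective curve $C\subset P_X$ obtained as a sufficiently general complete intersection of ample divisors on $P_X$; the inclusion induces, via Albanese functoriality, a surjection $\pi\colon \Jac(C)\twoheadrightarrow P_X$. Restricting the universal cycle yields a correspondence $Z|_{C\times X}\in \CH^{2}(C\times X)$, and pushing forward along the Abel--Jacobi map of $X$ produces a homomorphism $\phi\colon \Jac(C)\to P_X$. The defining property $AJ(Z_a)=a$ for every $a\in P_X$, applied to $0$-cycles $\sum n_i[c_i]$ on $C$ of degree zero, identifies $\phi$ with $\pi$. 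To promote the surjection $\pi$ to an isogeny, I would arrange that the class of $C$ on $P_X$ is a positive integer multiple of the minimal class; this is where condition (iii) of Theorem 2.3 enters decisively, allowing Bertini- and Lefschetz-type refinements to produce a smooth, irreducible $C$ of genus exactly $g=5$ with $\pi$ of finite degree.

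The principal obstacle is this last construction, since (iii) only asserts that the minimal class is algebraic --- i.e., a $\ZZ$-linear combination of classes of algebraic $1$-cycles --- not that it is effective, so the extraction of a single smooth irreducible curve $C$ whose Albanese-induced map to $P_X$ is an isogeny requires genuine work. One possible route is to represent the minimal class as a difference of effective $1$-cycles, smooth them individually, and use the identification $\phi=\pi$ coming from the universal cycle to control the kernel by dimension reasons; equivalently one may work with hyperplane sections of the Fano surface of lines on $X$, whose Albanese is $P_X$, and exploit $Z$ to split the surjection on Jacobians. Once the isogeny $\Jac(C)\sim P_X$ is in hand, it directly contradicts Theorem 2.4, and stable rationality of the very general $X$ is ruled out.
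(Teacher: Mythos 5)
Your reduction to the existence of a universal codimension-two cycle (stable rationality $\Rightarrow$ integral Chow-theoretic decomposition $\Rightarrow$ cohomological decomposition by Theorem 2.2 $\Rightarrow$ universal cycle and algebraicity of the minimal class by Theorem 2.3) matches the paper. The gap lies in the step you yourself flag as the ``principal obstacle,'' and it is not a technical refinement but the entire mathematical content of the theorem. A complete-intersection curve $C\subset P_X$ has genus far larger than $5$, and the surjection $\Jac(C)\twoheadrightarrow P_X$ it induces exists for \emph{every} principally polarized abelian variety by the Lefschetz hyperplane theorem --- no universal cycle is needed to produce it, and it yields no contradiction with Naranjo--Pirola, which forbids an \emph{isogeny} and hence requires $\dim \Jac(C)=5$. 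Your identification $\phi=\pi$ is correct but therefore carries no information. To force $g(C)=5$ you would need an \emph{effective} smooth irreducible curve of minimal class $\theta^{4}/4!$; but condition (iii) of Theorem 2.3 gives only algebraicity of that class as a $\ZZ$-combination of cycles, possibly non-effective, and passing from algebraic to effective is precisely the hard open problem (indeed, by the Matsusaka--Ran criterion an effective $1$-cycle of minimal class would make $P_X$ a Jacobian outright, contradicting Clemens--Griffiths without any appeal to Naranjo--Pirola). Your proposed repairs do not close this: smoothing the effective pieces of a difference of cycles changes their classes, and hyperplane sections of the Fano surface of lines again have genus much larger than $5$, so their Jacobians surject onto $P_X$ for every cubic threefold, rational or not.

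The paper reaches the isogeny by a mechanism your sketch omits entirely. It exploits the support $D$ of the decomposition of the diagonal: the action of $\Delta_X=x\times X+Z$ on $A_1(X)\cong P_X$ forces some desingularized component $\wt{D_i}$ of $D$ to satisfy $\Pic^0(\wt{D_i})\twoheadrightarrow P_X$, and the universal cycle then enters through a Mumford--Roitman countability argument applied to the incidence set $R=\{(a,b)\mid Z_*(a)=j_*(b)\}\subset P_X\times\Pic^0(\wt{D_i})$: this is a countable union of closed subvarieties, some component of which dominates $P_X$ finitely and defines a homomorphism $R_*$ of abelian varieties with $AJ\circ j_*\circ R_*=n\,\mathrm{Id}$, hence with finite kernel; a correspondence $R'$ in the opposite direction, built from the injectivity of $Z_*$, pins down the degree, and the argument is then repeated for a hyperplane section curve $C_t$ of $\wt{D_i}$ to land on a Jacobian. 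Without an argument of this kind --- or a genuine solution to the effectivity problem for the minimal class --- your proof does not reach the contradiction with Theorem 2.4.
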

\begin{proof}
Suppose that for a very general smooth cubic threefold $X$ in $\PR^4$ it is stably rational. Then the diagonal of $X$ admits an integral Chow theoretic decomposition and  we have 
$$\Delta_X=x\times X+Z$$
modulo rational equivalence,
where $Z$ is supported on $X\times D$, $D$ is a proper algebraic subset of $X$. Now, this correspondence $\Delta_X$ acts as the identity automorphism on $A_1(X)$ (the group of algebraically trivial one cycles on $X$ modulo rational equivalence). By the integral decomposition of the diagonal we have 
$$\Delta_{X*}=(x\times X)_*+Z_*$$
Therefore by the Chow moving lemma we have $$Id = Z_ *$$
This is because the action of $x\times X$ on $A_1(X)$ is zero (we can move the support of the one cycle away from $x$ modulo rationally equivalence).
Therefore, we need to examine the action of $Z_*$ on $A_1(X)$. By the very definition of $Z$ we have that $im(Z_*)$ is supported on $A_1(D)$, one cycles on the proper Zariski closed subset $D$. Now $D$ is a reducible surface (possibly singular) at finitely many points. Then we have $D_i$'s as its irreducible components and $\wt{D_i}$'s are its desingularizations. It follows that $\wt{D}=\cup_i \wt{D_i}$ inside the blow-up $\wt{X}$ of $X$ at these finitely many points has the following property
$A_1(\wt{D})\to A_1(D)$ is surjective. Now $A_1(\wt{D})$ is dominated by the direct sum of $\Pic^0(\wt{D_i})$. Thus we have 
$$\oplus \Pic^0(\wt{D_i})\to A_1(D)\to A_1(X)$$
is surjective. Now, identifying $A_1(X)$ with $P_X$, the Prym variety of $X$ we have $\oplus \Pic^0(\wt{D_i})\to P_X$ an onto map. But $im(Pic^0(\wt{D_i}))$ in $P_X$ is an abelian subvariety of $P_X$. Since $P_X$ is simple, we see that the image is either zero or all of $P_X$. All the images of all $\Pic^0(\wt{D_i})$ cannot be zero as, then $P_X$ is zero, which is not possible by any chance.
Therefore, there exists $D_i$ such that $\wt{D_i}$ surjects onto $P_X$, for a single component $D_i$ of $D$. So we have $\Pic^0(\wt{D_i})$ surjects onto $A_1(X)$, which is isomorphic to $P_X$. Since we have the Chow theoretic decomposition of the diagonal, we have the cohomological decomposition of the diagonal and hence there exists a universal codimension-two cycle by \ref{thm1}. Consider the universal codimension two cycle $Z$ on $P_X\times X$. Hence we have $A_1(D_i)$ to $P_X$ is surjective. Taking $A_1(\wt{D_i})$ as $\Pic^0(\wt{D_i})$, we have 
$$\Pic^0(\wt{D_i})\to P_X$$
surjective. Hence, the dual map 
$$P_X^{\vee}\to Alb(\wt{D_i})$$
is injective. We have $P_X$ is isogenous to $ P_X^{\vee}$ by a principal isogeny.
Also we have 
$$P_X^{\vee}\to Alb(\wt{D_i})\to Pic^0(\wt{D_i})$$
is injective, composition with the principal isogeny $Alb(\wt{D_i})\to Pic^0(\wt{D_i})$. Now compose it with $\Pic^0(\wt{D_i})\to P_X$, then we have a map of abelian varieties from $$P_X^{\vee}\to P_X$$
Both abelian varieties are simple and hence the kernel is finite or it is a zero map.

Suppose that the map from $P_X^{\vee}$ to $P_X$ is zero. Then we have the pushforward homomorphism from $\Pic^0(\wt{D_i})\to A_1(X)$  containing an abelian variety that is isogenous to $P_X$. Now, the universal codimension two cycle on $P_X\times X$, defines a map from $P_X\to A_1(X)$, which is injective. By the fact that $A_1(X)$ is dominated by $\Pic^0(\wt{D_i})$, we have the subset of 

$$R=\{(a,b)|Z_*(a)=j_*(b)\}\subset P_X\times \Pic^0(\wt{D_i})$$

Here, $j$ is the regular map from $\wt{D_i}$ to $X$.

By the Mumford-Roitman argument we have $R=\cup_i R_i$ a countable union of algebraic subvarieties of the product: 

$$P_X\times \Pic^0(\wt{D_i})$$ 

This $R$ projects surjectively onto $P_X$, and hence there exists $R_i$, which surjects onto $P_X$, because we are working  over uncountable algebraically closed fields. So, we have 
$$R_i\subset P_X\times \Pic^0(\wt{D_i})$$

which can be viewed as a correspondence. Taking a smooth hyperplane section of $R_i$, we have $R_i$ maps finitely to $P_X$. Then given $a$ in $P_X$, there exist finitely many $b$ in $R_i$, such that 

$$Z_*(a)=j_*(b)$$ 

Denote $R_i$ by $R$ for simplicity.

Hence $$R_*(a)= b_1+b_2+\cdots+b_n$$ and so,

$$j_*(R_*(a))=\sum_i j_*(b_i)=nZ_*(a)\;.$$

The above $R_*$ is a homomorphism of abelian varieties as $R_*$ by abuse of notation is just the composition of 
$$R_*: P_X\to A_0(\Pic^0(\wt{D_i}))$$
and $$Alb: A_0(\Pic^0(\wt{D_i}))\to \Pic^0(\wt{D_i})\;.$$ 

Now, by the definition of $R_*$, it is a regular map of abelian varieties.

Composing with the Abel-Jacobi map, we have

$$AJ\circ Z_*=Id$$
 we have

 $$AJ(j_*(R_*(a)))=na$$

 If we have $R_*(a)=0$, then it follows that $na=0$ and hence $a$ is $n$ torsion for a fixed $n$. So, we have 
 
 $$R_*: P_X\to \Pic^0(\wt{D_i})$$ 
 
 with a finite kernel. Therefore, we have $Z_*$ with rational coefficient factors through $\Pic^0(\wt{D_i})$. Hence $P_X$ cannot be contained in 
 
 $$\ker(\Pic^0(\wt{D_i})\to A_1(X))$$ 
 
 Therefore, the map of $P_X^{\vee}\to P_X$ is not a zero map and is therefore an isogeny of some positive degree.

 Now, this is not yet, we can conclude that the map $\Pic^0(\wt{D_i})$ to $P_X$ is an isogeny. As before, we have some component $R'$ of $R$ that is surjectively map onto $\Pic^0(\wt{D_i})$. This is because we are working with complex numbers. By the simplicity of $P_X$, it follows that $R'$ finitely maps to $\Pic^0(\wt{D_i})$. Hence $R'$ is a correspondence on $P_X\times \Pic^0(\wt{D_i})$. By the universal definition of $P_X$, we have that $R'_*$ is a regular homomorphism of abelian varieties (the definition is the same as in the case of $R_*$).

 Observe here that if

 $$Z_*(a_1)=j_*(b)=Z_*(a_2)$$

 then $$a_1=a_2$$
 
 as $Z_*$ is injective. Therefore given a $b$, there exists a unique $a$ such that
 
 $$Z_*(a)=j_*(b)\;.$$
 
 Define $R'_*(b)=a$ and
 applying $Z_*$ we have 
 
 $$Z_*R'_*(b)= Z_*(a)=j_*(b)$$

 This tells us that if the kernel of $j_*$ is positive dimensional, we have $R'_*$ has a positive dimensional kernel.  Consider the composition of $R_*\circ R'_*(b)$ that is 
 
 $$R_*(a)=\sum_i b_{i}$$
 
 such that for each $b_i$ we have $$j_*(a)=Z_*(b_i)$$
 Put $R'_*(b)=a$.
 Now we have 
 
 $$j_*R_*R'_*(b)=\sum_i j_*(b_{i})=\sum_i Z_*(a)=nj_*(b)$$
 
 This means that 
 
 $$R_*R'_*(b)-n(b)$$
 
 is in the kernel $B$ of $j_*$ for all $b$ in $\Pic^0(\wt{D_i})$. Suppose that this $B$ is positive-dimensional.  

 So, we have

 $$AJj_*(R_*R'_*(b)-nb)=nR'_*(b)-AJj_*(nb)=0$$

 This means that

 $$n(R'_*(b)-AJj_*(b))=0$$

 This happens for all $b$. We have for $b=R_*(a)$

 $$n(R'_*(R_*(a))-AJj_*R_*(a))=0$$

 This implies that

 $$n(R'_*R_*(a)-na)=0$$

 For all $a$. This implies that

 $$R'_*R_*(a)=na\;.$$

 On the other hand, 
 
 $$Z_*(a)=j_*(b)$$
 
 shows that for a unique $b$, there is a unique $a$, such that the above occurs. Then considering $R'_*$, we have a map from $B\to P_X$, such that 
 
 $$R'_*(b)=a$$
 
 where $Z_*(a)=j_*(b)$
 
 Now
 
 $$R_*R'_*(b)=R_*(a)=\sum_i b_i$$
 
 such that 
 
 $$Z_*(a)=j_*(b_i)$$
 
 So we have,
 
 $$j_*R_*R'_*(b)=\sum_i j_*(b_i)=nZ_*(a)=nj_*(b)$$
 
 This gives us that 
 
 $$j_*(R_*R'_*(b)-nb)=0$$
 
 for $b$ in $\Pic^0(\wt{D_i})$.  Therefore, given $b$, there is a unique $a$, which in turn gives $b_i$, such that $b\mapsto \sum_i b_i$. The above tells us that 
 $$j_*(\sum_i b_i-nb)=0$$ 
 for all $b$.
 
So we have

$$j_*(\sum_i b_i)=j_*(nb)$$

Now each $b_i$ maps to $\sum_j b_{ij}$, so we have 

$$b\mapsto \sum_i b_i\mapsto \sum_{ij}b_{ij}$$
such that
$$j_*(\sum_{ij}b_{ij})=\sum_{ij}j_*(b_{ij})=\sum_i nj_*(b_{i})=j_*(n^2b)\;.$$
Since the above happens for all $b$, we can put $b=R_*(a)$. Then we have 
$$j_*(n^2R_*(a))=j_*R_*(n^2a)$$
Composing with $AJ$, we have 
$$AJ(j_*R_*(n^2a))=n^3a$$
This is equal to 
$$AJ(j_*(\sum_i b_{ij}))=AJ\circ Z_*(\sum_{ij}a)=n^2a$$
Hence we have the equality after applying $AJ$,
$$n^2a=n^3a$$
for all $a$ in $P_X$. Hence, all elements in $P_X$ cannot be torsion of order $n^3-n^2$, and therefore we have $$R_*R'_*=nId$$ unless $n=1$. But if $n=1$, then we have $R_*(a)=b$ such that $Z_*(a)=j_*(b)$. In that case, we have $$ R_* R'_* (b)=R_* (a)=b$$
with $Z_*(a)=j_*(b)$. So we have $R_*$ both injective and surjective.

Hence, we find that $\Pic^0(\wt{D_i})\to A_1(X)\cong P_X$
is finite and of degree $n\geq 1$. So, it means that $P_X$ isogenous to the Picard variety of a smooth projective surface. So, $\Pic^0(\wt{D_i})$ is a simple abelian variety of dimension $5$. Moreover, this abelian variety has the property that it is not isogenous to the Jacobian of a smooth projective curve. Now $\wt{D_i}$ is a surface of irregularity $5$. Taking into account a hyperplane section $C_t$ of $\wt{D_{i}}$, we have $J(C_t)$ surjecting onto $Alb(\wt{D_i})$, which is isogenous to $P_X$. So we have $J(C_t)\to P_X$ a surjective map.

As before, we consider the subset:

$$R:=\{(z,w)|Z_*(z)=j_*(w)\}\subset P_X\times J(C_t)$$

Given any $z$, we have that there exists $w$ such that 

$$Z_*(z)=j_*(w)$$

Hence $R$ maps surjectively onto $P_X$ and $R$ is a  countable union of Zariski closed subsets of $P_X\times J(C_t)$. It follows that there exists a component of $R$ say $R_i$ that maps dominantly onto $P_X$. We can assume by taking a hyperplane section of $R_i$, that $R_i\to P_X$ is finite. Then given $z$ in $P_X$ there exists $w_1,\cdots, w_n$ such that 

$$R_{i*}(z)=\sum_i w_i$$

Applying $j_*$ on both sides we have 

$$\sum_i j_*(w_i)=j_*(R_{i*})(z)=nZ_*(z)$$

Applying $AJ$ we have

$$nz=AJ(j_*R_{i*}(z))$$

If we have $R_{i*}(z)$ is zero, then it follows that $z$ is n-torsion on $P_X$. The above equality tells us that

$$AJ\circ j_*\circ R_{i*}=nId $$

On the other hand, we have $a=AJ(j_*(b))$, that is, for a $b$ in $J(C_t)$, there exists $a$ such that the above occurs. So, we have

$$AJ\circ Z_*(a)=AJ( j_*(b))$$

Cancelling $AJ$ from both sides, we have

$$Z_*(a)=j_*(b)$$

Moreover, this $a$ is unique to $b$ as $Z_*$ is injective. So we have $R'_*$ defined by 

$$R'_*(b)=a$$

Now 

$$R_{i*}R'_*(b)=R_{i*}(a)=\sum_i b_i$$

where $b_i$'s are such that $j_*(b_i)=Z_*(a)$.

Then we have 

$$j_*R_{i*}R'_*(b)-nj_*(b)=nZ_*(a)-nZ_*(a)=0$$

for all $b$ in $J(C_t)$. That is, we have: 

$$j_*(\sum_i b_i-nb)=0$$

Taking as before $b=R_{i*}(a)$ we have 

$$AJ\circ j_*\circ R_{i*}(a)=na$$

and $$j_*(\sum_i b_i)=nZ_*(a)\;.$$ 

Applying $AJ$ we have 

$$na=n^2a$$

This means that for all $a$ in $P_X$, it is $n^2-n$-torsion if $n$ is not equal to $1$. This is not possible unless $n=1$ but if $n=1$, then we have 
$$R_{i*}R'_*(b)=R_{i*}(a)=b$$
such that $Z_*(a)=j_*(b)$.
Hence $R_{i*}$ is surjective and injective. 
This means that there is an isogeny from $J(C_t)\to P_X$, by the existence of the universal codimension-two cycle $Z$, which is impossible by \cite{NP}.

\end{proof}

As a consequence of the previous result and the result by Voisin \cite{Voi1}[Theorem 1.7], that a smooth cubic threefold admits the Chow-theoretic decomposition of the diagonal if and only if the class of  $\theta^4/4!$ is algebraic on $P_X$, here $\theta$ is a divisor on $P_X$ giving the polarization on $P_X$. By our previous theorem it follows that:

\begin{corollary}
The class of $\theta^4/4!$ is not algebraic on $P_X$ for a very general $X$.
\end{corollary}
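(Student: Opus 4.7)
The plan is to derive the corollary as a formal consequence of the preceding theorem combined with Voisin's equivalence \cite{Voi1}[Theorem 1.7]. The key observation is that the argument proving the theorem actually establishes something a priori stronger than stable irrationality: the working hypothesis from which the contradiction is drawn is the existence of an integral Chow-theoretic decomposition $\Delta_X = x\times X + Z$ modulo rational equivalence, and stable rationality enters only through the classical implication that it produces such a decomposition.

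Concretely, I would first rephrase the theorem in its sharper form: a very general cubic threefold $X\subset \PR^4$ admits no integral Chow-theoretic decomposition of the diagonal. This is what is actually proved, since the entire chain of reductions -- the surjection $\Pic^0(\wt{D_i})\to P_X$ for a component of the support $D$, the correspondence analysis producing the homomorphisms $R_{i*}$ and $R'_*$ on $P_X\times \Pic^0(\wt{D_i})$, and finally the passage to the Jacobian of a hyperplane section curve $C_t$ producing an isogeny $J(C_t)\to P_X$ -- uses only the assumed Chow decomposition, and terminates in a contradiction with Naranjo--Pirola \cite{NP}.

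I would then invoke \cite{Voi1}[Theorem 1.7], according to which, for a smooth cubic threefold $X$, an integral Chow-theoretic decomposition of $\Delta_X$ exists if and only if the minimal class $\theta^4/4!$ on $P_X$ is algebraic. Combining this equivalence with the sharpened form of the theorem yields the corollary at once. I do not anticipate any genuine obstacle: the deduction is formal. The only subtle point worth emphasising is that Voisin's equivalence is phrased in terms of the Chow decomposition itself rather than stable rationality, so one must read off the stronger conclusion from the proof above rather than from the statement of the theorem alone.
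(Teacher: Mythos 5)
Your proposal is correct and follows essentially the same route as the paper: the theorem's proof derives its contradiction from the assumed integral Chow-theoretic decomposition of the diagonal, so the decomposition cannot exist, and Voisin's equivalence (\cite{Voi1}, Theorem 1.7) between that decomposition and the algebraicity of $\theta^4/4!$ then gives the corollary. Your explicit remark that the stronger ``no Chow decomposition'' statement must be read off the proof rather than the theorem's statement is a point the paper leaves implicit, but it is the same argument.
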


\subsection*{Acknowledgements} The author thanks SRM AP for hosting this project.

\end{document}